\renewcommand{\part}[2]{\frac{\partial #1}{\partial #2}}
\newcommand{\R}{\mathbb{R}}
\newcommand{\A}{\textrm{Area}}
\newcommand{\bs}{\backslash}
\theoremstyle{definition}
\newtheorem{thm}{Theorem}[section]
\newtheorem{definition}[thm]{Definition}
\newtheorem{lem}[thm]{Lemma}
\newtheorem{cor}[thm]{Corollary}
\newtheorem{rmk}[thm]{Remark}
\newtheorem{conj}[thm]{Conjecture}
\title[Minimal Surface Tangent Cones at Infinity]{A Criterion for Uniqueness of Tangent Cones at Infinity for Minimal Surfaces}
\author{Paul Gallagher}
\date{March 20, 2017}
\begin{document}

\begin{abstract}
We partially resolve a conjecture of Meeks on the asymptotic behavior of minimal surfaces in $\mathbb{R}^3$ with quadratic area growth.
\end{abstract}
\maketitle

\section{Introduction}

Let $\Sigma$ be an embedded minimal surface in $\R^3$. By the monotonicity formula, the area density
\begin{equation*}
    \Theta(r) := \frac{A(\Sigma\cap B_r)}{\pi r^2}
\end{equation*}
is nondecreasing. If 
    \begin{equation*}
        \lim_{r\to\infty} \Theta(r) = \Theta(\infty) = k < \infty,
    \end{equation*}
we say that $\Sigma$ has quadratic area growth, or the area growth of $k$ planes.

\begin{figure}
    \centering
    \includegraphics[scale=0.75]{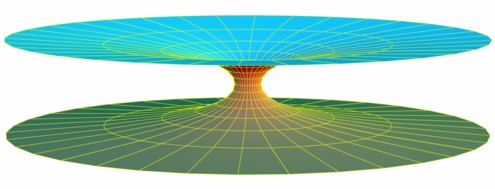}
    \caption{Catenoid (from http://www.indiana.edu/~minimal)}
    \label{fig:catenoid}
\end{figure}

\begin{figure}
    \centering
    \includegraphics{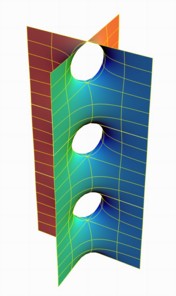}
    \caption{Scherk Singly Periodic (from http://www.indiana.edu/~minimal)}    
    \label{fig:scherk}
\end{figure}

\begin{figure}
    \centering
    \includegraphics[scale=0.5]{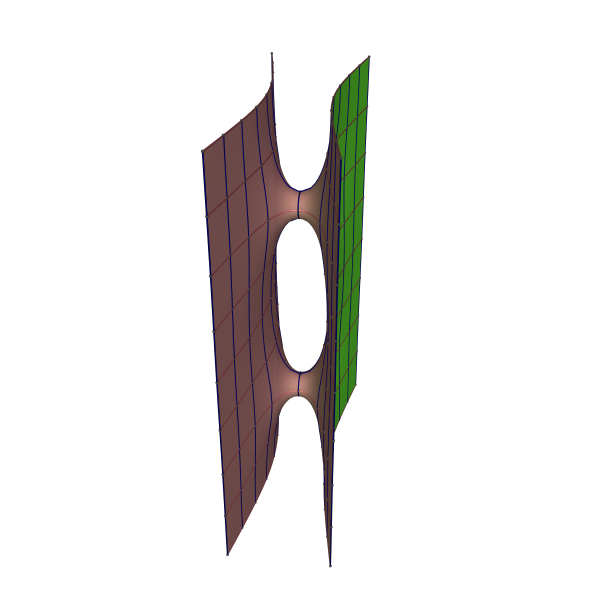}
    \caption{Non-orthogonal Scherk (from http://www.indiana.edu/~minimal)}
    \label{fig:scherk1}
\end{figure}

For surfaces with the growth of 2 planes, there are two canonical examples: the catenoid (Fig \ref{fig:catenoid}), and Scherk's singly periodic surfaces, which occur in a one parameter family (Fig \ref{fig:scherk} and Fig \ref{fig:scherk1}), where the parameter is the angle betwee the two leaves. As the angle goes to zero, the Scherk surfaces approach a catenoid on compact sets after an appropriate rescaling. In 2005, Meeks and Wolf proved the following theorem:

\begin{thm}\cite{meeks+wolf}\label{thm:periodic}
Suppose that $\Sigma$ is an embedded minimal surface in $\R^3$ which has infinite symmetry group and $\Theta(\infty) < 3$. Then $\Sigma$ is either a catenoid or a Scherk example. 
\end{thm}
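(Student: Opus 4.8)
The plan is to use the quadratic area growth hypothesis, together with the monotonicity formula and the classification of symmetric minimal surfaces, to reduce to a short list of models. First I would replace the symmetry group $G$ of $\Sigma$ by its closure $\overline{G}$, a closed subgroup of the isometry group of $\R^3$. If $\overline{G}$ has positive dimension, then $\Sigma$ is invariant under a one-parameter group of rigid motions, which after conjugation is a rotation group about a line, a translation group, or a helicoidal (screw-motion) group. Minimal surfaces with these invariances are classical: in the translational case $\Sigma$ is a cylinder over a geodesic, hence a plane; in the rotational case $\Sigma$ is a plane or a catenoid; in the helicoidal case $\Sigma$ lies in the helicoid family, and every non-flat such surface has cubic area growth, so $\Theta(\infty) = \infty$, against the hypothesis. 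Excluding the trivial flat case $\Theta(\infty) = 1$, this branch yields exactly the catenoid.

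If instead $\overline{G}$ is infinite and discrete, then by Bieberbach's theorem it contains a nontrivial translation, so $\Sigma$ is invariant under a lattice $\Lambda$ of rank $1$, $2$, or $3$, and the higher ranks are ruled out by area growth. A rank-$3$ lattice makes $\Sigma$ cover a closed minimal surface in a flat $3$-torus, whose area in $B_r$ grows like $r^3$, so $\Theta(\infty) = \infty$. If $\mathrm{rk}\,\Lambda = 2$, then $\Sigma/\Lambda$ is a properly embedded minimal surface in the flat manifold $T^2\times\R$: if it is compact, the height function is harmonic, hence constant, so $\Sigma$ is a plane, and otherwise an analysis of its ends (which are asymptotic to finite unions of vertical planes) shows that $\Sigma$ contains arbitrarily many parallel planes meeting $B_r$, again forcing $\Theta(\infty) = \infty$. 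Hence $\mathrm{rk}\,\Lambda = 1$.

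It remains to treat the singly periodic case: $\Sigma$ is invariant under a single translation $T$ (an irrational screw motion would already have surfaced in the continuous case, and a rational one yields invariance under a pure translation, a power of the screw motion). I would then pass to $\widehat{\Sigma} = \Sigma/T$, a properly embedded minimal surface in $\R^3/T \cong \R^2\times S^1$. The crucial input is that quadratic area growth of $\Sigma$ forces $\widehat{\Sigma}$ to have \emph{finite total curvature}; then, by the structure theory of Meeks and Rosenberg for minimal surfaces in such quotients, $\widehat{\Sigma}$ has finitely many ends, each asymptotic to a flat annulus, i.e.\ to a ``vertical half-plane'' (Scherk-type) end. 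An area count gives $\Theta(\infty) \ge \tfrac{1}{2}(\#\,\text{ends})$, so $\Theta(\infty) < 3$ leaves at most five ends; a short case analysis using embeddedness and the balancing of the flat ends (two ends would force $\Sigma$ flat) isolates the case of exactly four Scherk-type ends, with $\Theta(\infty) = 2$. Finally, the classification of properly embedded singly periodic minimal surfaces with four Scherk-type ends --- obtainable from the Weierstrass representation and the vanishing of the horizontal flux, which pins the four asymptotic half-planes into two full planes through the axis of $T$ and forces genus zero --- identifies $\Sigma$ as a member of the Scherk family.

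The main obstacle is the last step. The delicate point is showing that quadratic area growth of the ambient surface really forces \emph{finite total curvature} of the quotient: this is precisely where helicoidal-type ends --- which themselves have the borderline growth one is trying to exclude --- must be ruled out, and where the symmetry is presumably essential. One must then use the sharp inequality $\Theta(\infty) \ge \tfrac{1}{2}(\#\,\text{ends})$ strongly enough to pin down the precise topological type --- four ends, genus zero --- from $\Theta(\infty) < 3$, after which the identification with the Scherk family is a nontrivial flux and moduli computation.
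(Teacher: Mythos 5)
This statement is Theorem \ref{thm:periodic} of the paper, which is quoted from Meeks and Wolf \cite{meeks+wolf} and is not proved in the paper at all; there is therefore no in-paper argument to compare against, and your sketch is implicitly taking on the burden of reproving an entire research article. As an outline of the known reduction it is reasonable, but it has genuine gaps that go beyond ``details to be filled in.''

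First, the group-theoretic reduction is not correct as stated. Bieberbach's theorem applies to crystallographic (cocompact) groups; an infinite \emph{discrete} group of isometries of $\R^3$ need not contain any nontrivial translation. The cyclic group generated by a screw motion whose rotation angle is an irrational multiple of $2\pi$ is infinite, discrete, and translation-free, and its closure is itself --- so this case neither ``surfaces in the continuous case'' nor reduces to the singly periodic case. Surfaces invariant under such a screw motion must be handled separately (as Meeks--Wolf do), and this is exactly the regime where helicoidal behavior, which sits at the borderline of the area-growth hypothesis, lives. Second, the two steps you yourself flag as ``the main obstacle'' are each major theorems, not computations: (i) passing from quadratic area growth of $\Sigma$ to finite total curvature of the quotient $\widehat{\Sigma}$ with Scherk-type ends requires the full Meeks--Rosenberg structure theory plus a separate argument excluding helicoidal ends; and (ii) the uniqueness of the Scherk family among properly embedded singly periodic surfaces with genus-zero quotient and four Scherk-type ends is the principal content of \cite{meeks+wolf}, proved there by a degree-type argument on a moduli space of flat structures (``orthodisks'') via Teichm\"uller theory --- it does not follow from the Weierstrass representation and flux balancing in any routine way. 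As written, the proposal defers essentially all of the content of the theorem to these two unproved steps, so it does not constitute a proof.
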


Meeks has conjectured that the symmetry condition in the above may be removed:

\begin{conj}\cite{meeksGlobal}
Let $\Sigma$ be an embedded minimal surface in $\R^3$ with area growth of 2 planes. Then $\Sigma$ is either a catenoid or a Scherk example.
\end{conj}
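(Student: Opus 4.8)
The plan is to study $\Sigma$ through its asymptotic geometry at infinity, i.e.\ its tangent cone(s) at infinity. (We may assume $\Sigma$ is connected; the only disconnected surface with $\Theta(\infty)=2$ is a pair of parallel planes, which is excluded by hand.) First I would form the blow-down family $\Sigma_\lambda := \lambda^{-1}\Sigma$ as $\lambda\to\infty$. By the monotonicity formula and the bound $\Theta(\infty)=2$, a subsequence converges as varifolds to a stationary $2$-cone $C$ with constant density $2$, the convergence being smooth away from the origin and the singular set of $C$. Since $\Sigma$ is smooth, $C$ cannot carry a genuine triple junction, so the link of $C$ is a multiplicity-weighted union of great circles of total length $4\pi$; balancing the unit conormals then shows that $C$ is either a plane with multiplicity two, or a pair of transverse planes meeting along a line $L$ through the origin. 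The catenoid realizes the first case (a horizontal plane, doubled), and every Scherk example realizes the second (two planes at the angle of the leaves), so the dichotomy of the conjecture is precisely the dichotomy of possible tangent cones at infinity.

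The core of the argument is to promote this \emph{subsequential} statement to \emph{uniqueness} of the tangent cone at infinity, together with a polynomial rate of approach. Outside a large ball I would write $\Sigma$ as a normal graph over $C$ with graph function $u(\omega,t)$, where $\omega$ ranges over the link and $t=\log r$ (in the doubled-plane case this graph is two-valued). The minimal surface equation then becomes, to leading order, a gradient flow in $t$ for a functional on cross-sections whose critical points are exactly the links of minimal cones, and a {\L}ojasiewicz--Simon inequality for that functional converts it into a differential inequality forcing $u(\cdot,t)$ to converge and to decay like $e^{-\delta t}=r^{-\delta}$. Two features obstruct this. On the doubled plane the Jacobi operator carries the $\log r$-growing ``catenoidal'' Jacobi field (the two sheets spread logarithmically), whose coefficient is a flux of $\Sigma$ that is independent of $r$ by the first variation formula; the {\L}ojasiewicz argument must therefore be run modulo this slowly growing mode. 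On the pair of transverse planes the link is not smooth — it has the two poles of $L$ — so the linearized operator has to be set up with the correct matching conditions along $L$. Carrying these through, rather than assuming them away, is exactly the content of a ``criterion,'' and it is the step I expect to be the genuine obstacle.

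Granting uniqueness with a rate, I would finish as follows. In the doubled-plane case the rate implies that $\Sigma$ has finite total curvature and exactly two ends, each a graph over the plane with at most logarithmic growth; Schoen's characterization of the catenoid (a complete, properly embedded minimal surface of finite total curvature with two ends is a catenoid) then identifies $\Sigma$. In the transverse-planes case the asymptotic expansion locks $\Sigma$, outside a compact set, onto the four half-plane wings of $C$; I would then try to show that $\Sigma$ is asymptotically invariant under the translation along $L$ that matches the two ends attached to each plane, producing an infinite symmetry group, after which Theorem~\ref{thm:periodic} of Meeks--Wolf gives that $\Sigma$ is a Scherk example. This is where the ``partial'' in the abstract lives: absent a hypothesis taming the flux and the number of ends (and, in the transverse case, the period), I do not see how to exclude a surface whose blow-downs oscillate between two distinct catenoidal normalizations, and it is precisely such a hypothesis — a criterion for uniqueness of the tangent cone at infinity — that makes the argument go through.
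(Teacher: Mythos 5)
The statement you have set out to prove is presented in the paper as an open \emph{conjecture} of Meeks, cited from \cite{meeksGlobal}; the paper contains no proof of it and does not claim one. What the paper actually proves is far weaker: uniqueness of the tangent cone at infinity under strong structural hypotheses (Theorem \ref{thm:sublin1} and Corollary \ref{cor:sublin}), via a lower area bound for the leaves outside a sublinearly growing tube and a Cauchy-type estimate on the radial projection in the style of \cite{white}. So there is no proof in the paper to compare yours against, and your text should be read as a research program rather than a proof. Judged as a proof, it has genuine gaps at every stage where the conjecture is actually hard, most of which you flag yourself.

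Concretely: (i) the {\L}ojasiewicz--Simon step is not available off the shelf in either case --- over the pair of transverse planes the link is singular at the two crossing points and the linearized problem with matching conditions along $L$ has not been set up, while over the doubled plane the logarithmically growing catenoidal Jacobi field sits exactly at the critical indicial rate, and ``running the argument modulo this mode'' is precisely the unresolved difficulty, not a remark; (ii) even granting a unique tangent cone with a polynomial rate, in the doubled-plane case you invoke Schoen's theorem \cite{schoen}, which requires finite total curvature and exactly two ends --- neither follows from quadratic area growth alone, and supplying them is exactly what Collin's theorem \cite{collin} does under a finite-genus hypothesis you have not assumed; (iii) in the transverse case, asymptotic invariance under a translation along $L$ is not an isometry of $\Sigma$, so Theorem \ref{thm:periodic} does not apply; upgrading asymptotic periodicity to genuine periodicity is essentially the whole content of the conjecture beyond Meeks--Wolf; (iv) your classification of the density-two cones leans on ``since $\Sigma$ is smooth, $C$ cannot carry a genuine triple junction,'' but varifold limits of smooth embedded minimal surfaces can a priori be singular, so ruling out geodesic nets with junctions in the link requires an actual argument (parity of density, structure of the convergence, or an Allard-type regularity statement). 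None of these four points is a routine verification; each is a substantive open problem, and your own closing paragraph concedes as much.
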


However, an initial difficulty with the above is that it is not yet known that a minimal surface with quadratic growth even needs to be asymptotic to a catenoid or a Scherk example. By the compactness theory from GMT, it is known that if $\Sigma$ is an embedded minimal surface with quadratic area growth, then for any sequence $r_i\to\infty$, there exists a subsequence $\rho_i$ such that $\Sigma/\rho_i\cap B_1$ converges to a minimal cone $\mathcal{C}$ in the varifold topology. Such a cone $\mathcal{C}$ is called a \textbf{tangent cone at infinity}. A priori, there may be many tangent cones at infinity. 

This leads to the following conjecture, also due to Meeks:

\begin{conj}\cite{meeksGlobal}
Let $\Sigma$ be an embedded minimal surface in $\R^3$ with quadratic area growth. Then $\Sigma$ has a unique tangent cone at infinity.
\end{conj}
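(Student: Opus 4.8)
The plan is to reduce the conjecture to a \emph{local}, quantitative statement about the rate at which the rescalings $\Sigma_t := e^{-t}\Sigma$ approach a cone, and then to isolate the single configuration in which the standard tangent-cone machinery breaks down.

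First I would pin down the candidates. Any tangent cone at infinity $\mathcal C$ is a stationary integral $2$-varifold in $\R^3$ invariant under dilations about the origin, so its link is a stationary integral $1$-varifold in the round $S^2$; closed geodesics there are great circles, so by the constancy theorem $\mathcal C = \sum_j m_j P_j$ is a finite union of planes through the origin with positive integer multiplicities, and since the density of $\mathcal C$ at $0$ equals $\sum_j m_j$ and also equals $\Theta(\infty)$, we get $\sum_j m_j = k$. Away from $0$, $\mathcal C$ is smooth except along the finitely many lines $P_i\cap P_j$, and embeddedness of $\Sigma$ restricts which configurations occur. Next, the set $\mathcal T$ of all tangent cones at infinity is connected: $t\mapsto\Sigma_t$ is a continuous curve in the space of integral varifolds whose image has uniformly bounded local mass by the monotonicity formula, and $\mathcal T$ is precisely the $\omega$-limit set of this precompact continuous curve, hence compact and connected. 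The set of candidate cones is finite-dimensional — the only continuous parameters are the directions $P_j\in\mathrm{Gr}(2,3)$, with the multiplicities $(m_j)$ discrete — so it suffices to prove: near each $\mathcal C_0\in\mathcal T$, the set $\mathcal T$ is a single point. Connectedness then forces $\mathcal T$ to be a point everywhere.

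The local statement is where the real analysis sits. In cylindrical coordinates on $S^2\times\R$ given by $(x/|x|,\log|x|)$, the cone $\mathcal C_0$ is a critical point of the reparametrized area functional (the ``cylindrical energy''), and I would try to prove a \L{}ojasiewicz--Simon inequality for this functional near $\mathcal C_0$; it would yield a definite decay rate $\mathrm{dist}(\Sigma_t, g_t\cdot\mathcal C_0)\lesssim (1+t)^{-\alpha}$ along a path $g_t$ of rotations and thereby force $g_t$ to converge. When the link of $\mathcal C_0$ is smooth, multiplicity one, and integrable this is classical — the Jacobi operator along a great circle is $\partial_s^2+1$, whose kernel $\{\cos s,\sin s\}$ has the dimension of $\mathrm{Gr}(2,3)$, so the cone is integrable and Allard--Almgren applies. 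The difficulty in $\R^3$ is that for $k\ge 2$ this benign situation never occurs: two distinct planes through the origin always meet in a line, so either $\mathcal C_0$ is a single plane with multiplicity $\ge 2$ — several sheets of $\Sigma$ collapsing onto one plane, the catenoid being the model — or $\mathcal C_0$ is a union of $\ge 2$ distinct planes whose link is a union of great circles crossing transversally, the Scherk examples being the model.

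For the crossing-planes case one can work outside an $O(1)$-neighborhood of the singular lines, where $\Sigma_t$ splits into graphs over the half-planes and a graphical decay estimate applies sheet by sheet; the content is to control the ``necks'' of $\Sigma_t$ near the crossing lines, and in the range $\Theta(\infty)<3$ relevant to Meeks's first conjecture one can bring in the Weierstrass-representation and harmonic-map techniques behind Theorem~\ref{thm:periodic}. The genuine obstacle is the collapsing-multiplicity case: there the linearization degenerates precisely because several sheets merge, the blow-down discards the logarithmically growing (catenoidal) Jacobi mode that records the necksize at each dyadic scale, and the integrability hypothesis of Allard--Almgren fails. Handling it requires a two-scale inner/outer expansion that retains that logarithmic mode, and it is a non-degeneracy assumption on exactly this mode — equivalently, a quantitative bound on how fast $\Theta(r)\to k$, or on the separation of the collapsing sheets at dyadic scales — that converts the conditional argument into an unconditional one. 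That assumption is the criterion of the title; removing it is, I expect, the main remaining difficulty.
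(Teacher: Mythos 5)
The statement you are addressing is a conjecture of Meeks that the paper does not prove; the paper only establishes it under an additional structural hypothesis (Theorem \ref{thm:sublin1} and Corollary \ref{cor:sublin}), and your proposal, as you concede in your final paragraph, is likewise a program rather than a proof. The genuine gaps are essentially where you place them, but they should be named precisely. First, your opening reduction is too quick: the link of a tangent cone at infinity is a stationary integral $1$-varifold in $S^2$, and such objects are a priori geodesic nets, not necessarily unions of full great circles; ruling out junctions for blow-downs of an embedded surface is not supplied by the constancy theorem, and the paper's hypotheses (and its Section \ref{sec:fut}) are designed precisely to sidestep this issue. Second, and more seriously, every candidate cone with $k\ge 2$ is either a plane of multiplicity $\ge 2$ or has singular lines, so the integrable, multiplicity-one Allard--Almgren/\L{}ojasiewicz--Simon machinery never applies directly; no \L{}ojasiewicz--Simon inequality is known in either the higher-multiplicity or the crossing-lines setting, and your two-scale expansion retaining the logarithmic Jacobi mode is a proposal for such an inequality, not a proof of one. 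Connectedness of the set of tangent cones plus ``locally a single point'' is a valid scheme, but the local statement is the entire content and remains open.

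For contrast, the paper's conditional argument is far more elementary and does not pass through a linearization at all. The topological hypothesis (each of the $2k$ sheets outside a sublinearly growing neighborhood of a line has homotopically nontrivial boundary in the solid torus $B_R\cap\{d(x,l_R)>R^\alpha\}$) forces the lower area bound $\A(B_R\cap\Sigma) > k\pi R^2 - CR^{\alpha+1}$, hence the polynomial decay $e(r)\le Cr^{\alpha-1}$ of the density deficit. White's Cauchy--Schwarz argument then bounds the spherical area of the radial projection of $\Sigma\cap(B_{2s}\setminus B_s)$ by $Ce(s)^{1/2}$, and summing over dyadic annuli gives a convergent geometric series, so the projection of $\Sigma\setminus B_r$ has area tending to zero and the tangent cone is unique. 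In other words, the paper converts a topological hypothesis into exactly the ``quantitative bound on how fast $\Theta(r)\to k$'' that you identify at the end as the missing non-degeneracy input; your diagnosis of where the difficulty lives is accurate, but the proposal does not close it.
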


In the case of finite genus, this had already been resolved by Collin \cite{collin}, who proved that any minimal surface with finite genus and quadratic area growth must be asymptotic to a single multiplicity $k$ plane. In particular, when combined with a result of Schoen \cite{schoen}, this resolves Meeks' full conjecture in the case of finite genus - that is, the only minimal surface with the area growth of two planes and finite genus is the catenoid. 

In this paper, we prove that Meeks' conjecture holds true under additional assumptions:

\begin{thm}\label{thm:sublin1}
Let $\Sigma$ be an embedded minimal surface with the area growth of $k$ planes. Suppose that there exists $\alpha < 1$ such that for all $R$ sufficiently large, there exists a line $l_R$
    \begin{equation*}
        \Sigma\cap B_R \cap \{d(x, l_R) > R^{\alpha}\}
    \end{equation*}
is a union of at least $2k$ disks $\Sigma_i$ and such that $\partial\Sigma_i$ is homotopically nontrivial in $\partial(B_R \cap \{d(x, l_R) > R^{\alpha}\})$. Then $\Sigma$ has a unique tangent cone at infinity. 
\end{thm}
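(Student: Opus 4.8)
The plan is to prove uniqueness in two stages: first classify every possible tangent cone at infinity under the hypothesis, then show the resulting cone does not depend on the blow-down sequence. In the first stage I claim that every tangent cone $\mathcal C$ is a \emph{book}: a union of $2k$ half-planes (counted with multiplicity) all sharing a common edge which is a line $\ell$ through the origin --- equivalently $\mathcal C = \mathcal C_0\times\ell$ with $\mathcal C_0$ a finite collection of rays in $\ell^\perp$ satisfying the first-variation balancing condition. In the second stage I must pin down $\ell$ and the angular positions of the pages, independently of the sequence.

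\emph{Classification.} Fix $r_i\to\infty$ with $\Sigma/r_i\cap B_1\to\mathcal C$ as varifolds, a minimal cone in $\R^3$. Its link is a stationary geodesic network in $S^2$, so $\mathcal C$ is a finite union of planar sectors meeting along lines through the origin. Rescaled by $1/r_i$ the tube $\{d(x,l_{r_i})\le r_i^\alpha\}$ becomes $\{d(x,l_{r_i}/r_i)\le r_i^{\alpha-1}\}$, which collapses since $\alpha<1$; after passing to a subsequence, $l_{r_i}/r_i\to\ell$. On $B_1\setminus\ell$, $\mathcal C$ is the varifold limit of the $2k$ rescaled disks $\Sigma_i$, each a meridian disk of the solid torus $B_{r_i}\cap\{d(x,l_{r_i})>r_i^\alpha\}$ winding once around $l_{r_i}$; this forces $\mathcal C$ to be supported on half-planes with common boundary $\ell$, and forces $\ell\ni 0$ since $\mathcal C$ is a cone about the origin. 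A union of half-planes through $0$ with common edge and density $\Theta(\infty)=k$ has exactly $2k$ pages with multiplicity, and the area estimate $\A(\Sigma\cap B_R\cap\{d(x,l_R)\le R^\alpha\})=O(R^{1+\alpha})=o(R^2)$ --- valid precisely because $\alpha<1$ --- shows the disks carry essentially all the mass; hence $\mathcal C$ is a book with $2k$ pages and spine $\ell$.

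\emph{Uniqueness.} The crucial step is to upgrade the topological hypothesis to quantitative control: for all large $R$, away from the tube $\Sigma$ should be a union of exactly $2k$ minimal graphs $\theta=\phi_j(\rho,z)$ (in cylindrical coordinates about $l_R$), with uniformly bounded geometry, each $C^1$-close after rescaling to a vertical half-plane $\{\theta=c_j(R)\}$. Granting this, I would (a) show $l_R/R$ converges: the concentration of mass, $\A(\Sigma\cap B_R\setminus\text{(tube)})=k\pi R^2(1+o(1))$ within distance $o(R)$ of $l_R$, pins the axis, and an almost-monotonicity estimate for the weighted second moments of $\|\Sigma\|$ should bound its total dyadic rotation; and (b) show each $c_j(R)$ converges. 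For (b), the normal deviation $u_j=\rho(\phi_j-c_j)$ satisfies the linearized minimal surface equation --- essentially $\Delta u_j=0$ on a long planar domain --- and the growth control from $\alpha<1$ should force $c_j(R)$ to be Cauchy; this is a \L{}ojasiewicz--Simon-type stabilization, and since the cross-section $\mathcal C_0$ (finitely many balanced rays) has only an isolated singularity, it should be within reach of that method, provided one checks that the balanced ray configurations form an integrable family.

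The main obstacle is the passage from the soft topological input to this quantitative multi-graph picture. One must rule out small-scale degenerations (for instance nearly-helicoidal pieces) inside the disks, using the quadratic area growth of $\Sigma$ together with the fact that each disk winds exactly once around $l_R$, in order to obtain uniform curvature bounds and graphicality away from the tube. A further difficulty is that these graphs live over \emph{half}-disks with a free inner boundary where they attach to the uncontrolled part of $\Sigma$ inside the tube, so the classical exterior-minimal-graph asymptotics --- which would immediately give a unique limiting plane for each sheet --- do not apply off the shelf; one must either adapt those asymptotics to the free-boundary setting or run the \L{}ojasiewicz--Simon argument directly for the singular cone $\mathcal C$, where the presence of the spine means the usual smooth-link theory has to be extended.
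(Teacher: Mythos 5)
There is a genuine gap: your plan never closes, and it misses the one idea that makes the theorem elementary. The paper's proof has nothing to do with classifying the tangent cone or with \L{}ojasiewicz--Simon stabilization. The hypothesis that each $\Sigma_i$ is a disk whose boundary is homotopically nontrivial in $\partial\bigl(B_R\cap\{d(x,l_R)>R^\alpha\}\bigr)$ is used to get a \emph{lower area bound}: $\partial\Sigma_i$ is contractible in the rotationally symmetric solid torus $T_R=B_R\cap\{d(x,l_R)>R^\alpha\}$ but not in its boundary, so $\Sigma_i$ spans the meridian and must have area at least that of a vertical cross-section of $T_R$, namely $\tfrac{\pi}{2}R^2-CR^{1+\alpha}$. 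Summing over the $2k$ disks gives $\A(\Sigma\cap B_R)\ge k\pi R^2-CR^{1+\alpha}$, i.e.\ the density defect $e(r)=k-\A(\Sigma\cap B_r)/(\pi r^2)$ decays like $r^{\alpha-1}$. White's argument then finishes: by Cauchy--Schwarz and the monotonicity formula, the area of the radial projection $x\mapsto x/|x|$ of $\Sigma\cap(B_{2s}\setminus B_s)$ onto $S^2$ is bounded by $e(s)^{1/2}$ times a controlled factor, hence by $Cs^{(\alpha-1)/2}$, which is summable over dyadic annuli; so the blow-downs cannot drift and the tangent cone is unique. No structural description of the cone is needed at all.

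By contrast, every hard step in your outline is left as an assumption. The ``quantitative multi-graph picture'' (uniform curvature bounds and graphicality of the sheets away from the tube) does not follow from the stated topological hypothesis without further work; the \L{}ojasiewicz--Simon machinery you invoke is not available off the shelf for cones whose links are singular geodesic networks, and you would additionally need integrability of the family of balanced ray configurations; and the free inner boundary where the sheets attach to the uncontrolled part of $\Sigma$ inside the tube defeats the standard exterior-graph asymptotics, as you yourself note. Each of these is a serious open-ended problem, so the proposal is a research program rather than a proof. The lesson is that the hypothesis $\alpha<1$ is designed to feed a \emph{rate} into White's density-decay criterion, not to support a classification-plus-stabilization argument.
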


This leads to the following:

\begin{cor}\label{cor:sublin}
    Let $\Sigma$ be an embedded minimal surface with quadratic area growth. Let 
        \begin{equation*}
            \mathcal{C}_\alpha = \{x_1^2 + x_2^2 \leq R^{2\alpha}\}.
        \end{equation*}
    Then if for some $R_0$, $\Sigma \backslash (B_{R_0}\cup\mathcal{C}_\alpha)$ is a union of $2k$ topological disks $\Sigma_i$ each with finitely many boundary components, then $\Sigma$ has a unique tangent cone at infinity.
\end{cor}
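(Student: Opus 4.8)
The plan is to deduce the corollary directly from Theorem~\ref{thm:sublin1}, verifying its hypotheses with the line $l_R$ taken, for every large $R$, to be the common axis $l$ of the cylinders $\mathcal C_\alpha$ (rotate coordinates so that $l$ is the $x_3$-axis). The first step is a containment that costs nothing: once $R^\alpha > R_0$, any $x$ with $|x| \le R$ has $d(x,l) \le |x| \le R$, so $d(x,l) > R^\alpha$ forces both $|x| > R_0$ and $d(x,l) > R^\alpha \ge |x|^\alpha$, i.e. $x \notin \mathcal C_\alpha$. Thus $B_R \cap \{d(\cdot,l) > R^\alpha\} \subset \R^3 \bs (B_{R_0} \cup \mathcal C_\alpha)$, and hence $\Sigma \cap B_R \cap \{d(\cdot,l) > R^\alpha\}$ is contained in $\bigsqcup_{i=1}^{2k}\Sigma_i$, with every connected component lying inside a single $\Sigma_i$. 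Conversely, each $\Sigma_i$ meets this region: being an end of $\Sigma$ it is unbounded and contained in $\{|x| > R_0\}$, so for $R$ large it crosses $\partial B_R$ at some point $q$, where $d(q,l) > |q|^\alpha = R^\alpha$, and pushing $q$ slightly inward along $\Sigma_i$ keeps $d(\cdot,l) > R^\alpha$ while making $|\cdot| < R$. Therefore $\Sigma \cap B_R \cap \{d(\cdot,l) > R^\alpha\}$ has at least $2k$ connected components for every large $R$, one in each $\Sigma_i$.

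It remains to show that, for a cofinal set of radii, each such component is a topological disk whose boundary is homotopically nontrivial in the torus $\partial(B_R \cap \{d(\cdot,l) > R^\alpha\})$ (note that $B_R \cap \{d(\cdot,l) > R^\alpha\}$ is a solid torus). I would first use Sard's theorem to restrict to $R$ for which $\partial B_R$ and the cylinder $\{d(\cdot,l) = R^\alpha\}$ meet $\Sigma$ transversally; for such $R$ the set $\Sigma \cap B_R \cap \{d(\cdot,l) > R^\alpha\}$ is a compact surface-with-boundary, and its boundary lies entirely in $\partial B_R \cup \{d(\cdot,l) = R^\alpha\}$, since the curves $\bigcup_i \partial\Sigma_i$ sit on $\partial B_{R_0} \cup \partial \mathcal C_\alpha$ and are disjoint from $B_R \cap \{d(\cdot,l) > R^\alpha\}$ once $R^\alpha > R_0$. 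Each component is a subsurface of the genus-zero $\Sigma_i$, hence planar; to conclude it is a disk one must rule out extra boundary circles, and for essentiality one must rule out a boundary circle bounding a disk inside the solid torus. For the latter, such a circle would, by the convex-hull property, bound a subdisk of $\Sigma$ confined to a region missing the axis $l$, and tracing it back through the simply connected $\Sigma_i$, which escapes to infinity outside $\mathcal C_\alpha$, contradicts the maximum principle. For the former, one uses that for large $R$ the complement $\Sigma_i \bs (B_R \cap \{d(\cdot,l) > R^\alpha\})$ is a union of neighborhoods of the ends of $\Sigma_i$ at infinity together with the part of $\Sigma_i$ within horizontal distance $R^\alpha$ of $l$ --- all connected and meeting $\partial\Sigma_i$ --- so that removing them from the disk $\Sigma_i$ leaves disks; the hypothesis that each $\Sigma_i$ has only finitely many boundary components is precisely what keeps these manipulations finite.

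The main obstacle is this last topological step: controlling how the fixed-genus pieces $\Sigma_i$ are cut by the expanding sphere $\partial B_R$ and the differently scaled cylinder $\{d(\cdot,l) = R^\alpha\}$, so that no spurious handle or inessential boundary circle appears at any large scale. Everything else is bookkeeping: once the cut pieces are known to be at least $2k$ essential disks, Theorem~\ref{thm:sublin1} applies with $l_R \equiv l$ and yields the uniqueness of the tangent cone at infinity.
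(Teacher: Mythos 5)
Your reduction to Theorem \ref{thm:sublin1} with $l_R$ taken to be the fixed axis of $\mathcal{C}_\alpha$ is the right frame, and the containment $B_R \cap \{d(\cdot,l)>R^\alpha\} \subset \R^3 \setminus (B_{R_0}\cup \mathcal{C}_\alpha)$ together with the count of at least $2k$ components is essentially fine. But the step you yourself flag as ``the main obstacle'' --- that each component is a disk whose boundary is homotopically nontrivial in $\partial\bigl(B_R\cap\{d(\cdot,l)>R^\alpha\}\bigr)$ --- is the entire content of the corollary, and your sketch of it does not close. The failure mode is concrete: an end $N$ of some $\Sigma_i$ (a neighborhood of one of the finitely many punctures of the conformal disk) could have $x_3$ bounded on one side along $\partial N$, i.e.\ its boundary curve could run up the cylinder $\partial\mathcal{C}_\alpha$ and come back down without ever crossing to the other half of the solid torus; the resulting component of $\Sigma\cap B_R\cap\{d>R^\alpha\}$ would then have inessential boundary. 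Your proposed exclusion of this via the convex hull property and the maximum principle does not apply, because the offending piece of $\Sigma_i$ is noncompact (it escapes to infinity outside $\mathcal{C}_\alpha$), so there is no compact minimal surface to confine to a convex hull and no interior maximum to exploit; likewise your claim that removing the ``neighborhoods of the ends'' from the disk $\Sigma_i$ leaves disks is asserted, not proved.

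The paper spends all of Section 3 on precisely this point. It splits the ends into Option 1 ($x_3|_{\partial N}$ unbounded in both directions, which yields the essential-boundary disks needed for Theorem \ref{thm:sublin1}) and Option 2 ($x_3|_{\partial N}$ bounded on one side), and rules out Option 2 by a genuinely quantitative argument: a catenoid barrier forces $\inf_{\partial B_R} x_3|_N \le C_0 + \log R$, so blow-downs of $N$ cannot disappear at infinity; curvature estimates (using that $\partial N$ stays in $\{x_3\ge 0\}$ to get a uniform bound even at the south pole) force the blow-down limit to be a single smooth great circle through the north pole; hence $N$ alone would carry area $(\pi-\epsilon)R^2$, which is incompatible with having $2k$ components each needing at least $\pi R^2/2$ out of a total budget of $k\pi R^2$. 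None of this is recoverable from the soft topological bookkeeping in your proposal, so as written the argument has a genuine gap at its central step.
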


Note that the corollary substitutes the homotopy requirement from the theorem for the existence of a single line around which we can base our sublinearly growing set.

\subsection{Acknowledgments}
The author would like to thank his advisor, William Minicozzi, as well as Jonathan Zhu, Frank Morgan, Ao Sun, and Nick Strehlke for their comments and suggestions throughout the writing of this paper. 

\section{Proof of Theorem \ref{thm:sublin1}}\label{sec:proof}

The proof of this begins with the following: 

\begin{lem}[Lower Area Bound]\label{lem:LAB}
	Suppose that $\Sigma$ satisfies the conditions of Theorem \ref{thm:sublin1}. Then for some $C=C(\Sigma)$
	\begin{equation*}
		\A(B_R\cap\Sigma) >k\pi R^2 - CR^{\alpha + 1}.
	\end{equation*}
\end{lem}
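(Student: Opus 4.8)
The plan is to bound each of the (at least) $2k$ disks $\Sigma_i$ from below by roughly $\tfrac12\pi R^2$ and sum: since the $\Sigma_i$ are pairwise disjoint pieces of $\Sigma\cap B_R$, this gives the lemma. Write $W:=B_R\cap\{d(x,l_R)>R^\alpha\}$ and $\delta:=d(0,l_R)$. Provided $\delta<R$ (which we shall need, and expect to be forced; see below), the set $N:=B_R\cap\{d(x,l_R)\le R^\alpha\}$ is a tubular neighbourhood of a chord of $B_R$, so $W$ is a solid torus and $\partial W$ a $2$-torus. For a.e.\ $R$ the surface $\Sigma$ meets $\partial B_R$ and $\partial N$ transversally, so each $\Sigma_i$ is a compact properly embedded disk in $W$ whose boundary $\partial\Sigma_i$ is a single circle in $\partial W$, homotopically nontrivial there by hypothesis.

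The key topological fact is that \emph{$\Sigma_i$ meets every circle $\kappa\subset\operatorname{int}(W)$ isotopic to the core of the solid torus.} Indeed $W\setminus\kappa$ deformation retracts onto $\partial W$, so the inclusion $\partial W\hookrightarrow W\setminus\kappa$ is an isomorphism on $\pi_1$; if $\Sigma_i$ were disjoint from $\kappa$ then $\partial\Sigma_i$, bounding the disk $\Sigma_i$ inside $W\setminus\kappa$, would be null-homotopic there, hence (as $T^2$ is aspherical) null-homotopic in $\partial W$ --- contrary to hypothesis.

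Next let $\Pi(x)=\bigl(d(x,l_R),\,\langle x,v\rangle\bigr)$, with $v$ a unit vector along $l_R$, viewed as a map into the half-plane $\{(r,z):r>0\}$. Its differential sends the orthonormal pair $\nabla d(\cdot,l_R),\,v$ to the coordinate directions and annihilates the orthogonal rotational direction, so $\Pi$ is $1$-Lipschitz and each fibre $\Pi^{-1}(r,z)$ is a round circle of radius $r$ about $l_R$; when such a fibre lies in $B_R$ it is a core circle of $W$. By the previous paragraph $\Pi(\Sigma_i)$ therefore contains the set $D$ of all $(r,z)$ with $r>R^\alpha$ whose $\Pi$-fibre lies in $B_R$, and an elementary computation gives $\A(D)\ge\tfrac12\pi R^2-C R(R^\alpha+\delta)$. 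Since the Jacobian of $\Pi|_{\Sigma_i}$ is at most $1$, the area formula yields
\[
    \A(\Sigma_i)\ \ge\ \int_{\Sigma_i}\bigl|J(\Pi|_{\Sigma_i})\bigr|\ =\ \int_{\R^2}\#\bigl(\Sigma_i\cap\Pi^{-1}(y)\bigr)\,dy\ \ge\ \A(D).
\]
Summing over $2k$ of the disks, and using that $\A(\Sigma\cap B_R)$ is continuous in $R$ to discard the a.e.\ restriction, gives $\A(\Sigma\cap B_R)\ge k\pi R^2-2kC R(R^\alpha+\delta)$; enlarging $C$ makes the inequality strict.

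The one genuine obstacle is controlling $\delta=d(0,l_R)$: the estimate above is only useful once $\delta=O(R^\alpha)$, whereas if $l_R$ drifted to distance $\sim R$ from the origin the chord it spans would be short, $W$ would be a thin solid torus, and its meridian disks --- hence the $\Sigma_i$ --- could be small, collapsing the bound. I expect this to be excluded by combining the monotonicity upper bound $\A(\Sigma\cap B_R)\le k\pi R^2$ with the standard fact that a connected minimal surface of small area has small diameter, together with the density lower bound $\A(\Sigma\cap B_\rho)\ge\pi\rho^2$, which force $\Sigma$ to carry substantial area deep inside $B_R$ and so pin $l_R$ near the origin; making this quantitative is where the work lies. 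The solid-torus identification, the transversality for a.e.\ $R$, and the computation of $\A(D)$ are routine.
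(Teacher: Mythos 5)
Your argument is essentially the paper's own proof: the paper likewise lower-bounds each leaf $\Sigma_i$ by the area of a meridian cross-section of the solid torus $T_R=B_R\cap\{d(x,l_R)>R^{\alpha}\}$ and sums over the $2k$ disks. What you add is a correct justification of the one step the paper merely asserts (that ``the smallest spanning disk for any such curve has area at least that of a vertical cross section''): your linking argument that $\Sigma_i$ must meet every core-isotopic circle, followed by the $1$-Lipschitz projection onto the meridian half-plane and the area formula, is exactly the right way to prove that assertion. As for the one obstacle you flag, the paper's proof takes $T_R$ to be \emph{rotationally symmetric}, i.e.\ it implicitly normalizes $l_R$ to pass through the origin; under that reading $\delta=0$ and your estimate closes with no further work. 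You are right that the theorem as literally stated does not force $\delta=O(R^{\alpha})$, but your proposed derivation of this from monotonicity is unlikely to succeed: with $\delta\sim R/2$ each disk is still only forced to carry a fixed fraction of $\tfrac{\pi}{2}R^2$ in area, which is perfectly consistent with the global bound $\A(\Sigma\cap B_R)\le k\pi R^2$, so nothing pins $l_R$ to the origin from the hypotheses alone. Treat the position of $l_R$ as an implicit normalization in the statement (as the paper does) rather than as something to be proved, and your proof is complete.
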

\begin{proof}
	We will work on each leaf $\Sigma_i$ separately, and the lemma will come from adding the area of all the leaves together. 
    
    First note that $B_R\cap \{d(x,l_R)>R^{\alpha}\} = T_R$ is a rotationally symmetric solid torus and (since $\Sigma_i$ is a disk), $\partial\Sigma_i$ is contractible in $T_R$. However, since $T_R$ is rotationally symmetric, the smallest spanning disk for any such curve has area at least that of a vertical cross section $C$. Any such vertical cross section consists of a half-circle of radius $R$ minus a strip of length $2R$ and width $CR^{\alpha}$. Thus, we have 
    
    \begin{equation*}
        A(\Sigma_i) \geq A(C) \geq \frac{\pi}{2}R^2 - CR^{\alpha + 1}
    \end{equation*}
\end{proof} 

\begin{rmk}
    Note that Lemma \ref{lem:LAB} implies that there are in fact exactly $2k$ disks in the statement of Theorem \ref{thm:sublin1}.
\end{rmk} 

We make a definition:

\begin{definition}
The \textbf{error} at scale $r$ of a minimal surfaces with area growth of $k$ planes is defined as
	\begin{equation*}
		e(r) = k - \frac{\A(\Sigma\cap B_r)}{\pi r^2}
	\end{equation*}
\end{definition}

Thus, the Lemma \ref{lem:LAB} is equivalent to the statement:

\begin{equation}
	e(r) \leq Cr^{\alpha - 1}
\end{equation}

We now apply an argument of Brian White \cite{white} to prove uniqueness of the tangent cone. 

\begin{lem}
	Let $\Sigma$ satisfy the following: $\exists R_0, \alpha < 1$ such that for $R_0 < r< \infty$, 
		\begin{equation}\label{eqn:errbd}
			e(r) < Cr^{1-\alpha}
		\end{equation}
	Then $\Sigma$ has a unique tangent cone at infinity. 
\end{lem}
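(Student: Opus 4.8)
The plan is to prove the quantitative statement that the rescaled family $\{V_r\}_{r\ge R_0}$ converges as $r\to\infty$, where $V_r$ denotes the varifold $\Sigma/r$ with rescaled area measure $\|V_r\|(A)=r^{-2}\A(\Sigma\cap rA)$; once convergence is established there is only one subsequential limit, i.e.\ a unique tangent cone at infinity. The mechanism is the one used by White: the ``defect from being a cone'' that appears in the monotonicity formula controls the speed at which $V_r$ moves in $r$, and the hypothesis \eqref{eqn:errbd} makes that speed summable along dyadic scales.

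First I would record the monotonicity formula for the stationary surface $\Sigma$: for $0<s<t$,
\begin{equation*}
	\Theta(t)-\Theta(s)=\frac1\pi\int_{\Sigma\cap(B_t\bs B_s)}\frac{|x^\perp|^2}{|x|^4}\,d\mathcal H^2 ,
\end{equation*}
with $x^\perp$ the component of the position vector normal to $\Sigma$. Letting $t\to\infty$ and using $\Theta(\infty)=k$ gives $\int_{\Sigma\bs B_s}|x^\perp|^2|x|^{-4}\,d\mathcal H^2=e(s)$, so \eqref{eqn:errbd} says exactly that this defect decays polynomially: with $\gamma:=1-\alpha>0$ it is bounded by $Cs^{-\gamma}$.

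Next I would quantify the motion of $V_\rho$. Fixing $\phi\in C^1_c(B_2\bs B_{1/2})$, I would test the first variation identity of $\Sigma$ against the radial vector field $x\mapsto\phi(x/\rho)\,x$ and differentiate $\|V_\rho\|(\phi)=\rho^{-2}\int_\Sigma\phi(x/\rho)\,d\mathcal H^2$ in $\rho$; a short computation using stationarity then yields the exact identity
\begin{equation*}
	\frac{d}{d\rho}\|V_\rho\|(\phi)=-\rho^{-4}\int_\Sigma\bigl(\nabla\phi(x/\rho)\cdot\nu\bigr)(x\cdot\nu)\,d\mathcal H^2 ,
\end{equation*}
whose right side is, by Cauchy--Schwarz and the area bound $\|V\|(B_{2\rho})\le4\pi k\rho^2$, at most $C\|\nabla\phi\|_\infty\,\rho^{-1}\bigl(\int_{\Sigma\cap(B_{2\rho}\bs B_{\rho/2})}|x^\perp|^2|x|^{-4}\,d\mathcal H^2\bigr)^{1/2}$. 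Integrating over a dyadic block $\rho\in[2^j,2^{j+1}]$, applying Cauchy--Schwarz in $\rho$ against $d\rho/\rho$, and invoking the monotonicity formula, I would obtain
\begin{equation*}
	\bigl|\,\|V_{2^{j+1}}\|(\phi)-\|V_{2^j}\|(\phi)\,\bigr|\le C\|\nabla\phi\|_\infty\,e(2^{j-1})^{1/2}\le C\|\nabla\phi\|_\infty\,2^{-\gamma(j-1)/2}.
\end{equation*}
Since $\gamma>0$ the series $\sum_j 2^{-\gamma(j-1)/2}$ converges, so $\{\|V_{2^j}\|(\phi)\}_j$ is Cauchy for every such $\phi$; exhausting $\R^3\bs\{0\}$ by annuli and using the quadratic area bound with Allard's compactness theorem, the $V_{2^j}$ converge as varifolds to a stationary integral varifold whose density $\Theta$ is the constant $k$, hence whose monotonicity defect vanishes, i.e.\ a minimal cone $\mathcal C$. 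Finally, for arbitrary $r\to\infty$, taking $j$ with $2^j\le r<2^{j+1}$ and integrating the velocity bound from $2^j$ to $r$ gives $\bigl|\,\|V_r\|(\phi)-\|V_{2^j}\|(\phi)\,\bigr|\le C\|\nabla\phi\|_\infty e(2^{j-1})^{1/2}\to0$, so $V_r\to\mathcal C$ along every sequence $r\to\infty$, proving uniqueness.

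The step I expect to be the crux is the velocity estimate — relating the motion of the rescaled varifolds to the scale-invariant defect $\int|x^\perp|^2|x|^{-4}$. Because $\Sigma$ may be topologically complicated inside $B_{2\rho}\bs B_{\rho/2}$ and need not be a normal graph over the limiting cone, one cannot differentiate a graph function; the estimate must come directly from the first variation identity tested against the radial field, which is the point of White's argument. Given the polynomial decay \eqref{eqn:errbd}, the remaining steps — the dyadic summation, the passage from dyadic scales to all scales, and the identification of the limit as a cone — are then routine.
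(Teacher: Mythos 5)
Your proof is correct, and it runs on the same engine as the paper's: both are White's argument, resting on the identity that the monotonicity defect $\int_{\Sigma\bs B_s}|x^\perp|^2|x|^{-4}$ equals (a constant times) $e(s)$, a Cauchy--Schwarz step that pairs this $L^2$ defect against the quadratic area bound, and a dyadic summation made convergent by the polynomial decay of $e$ (which you correctly read as $e(r)\le Cr^{\alpha-1}$; the exponent in the lemma statement is a typo, as the preceding Lower Area Bound lemma makes clear). The packaging differs: the paper bounds the spherical area of the radial projection $F(\Sigma\cap(B_r\bs B_s))$, $F(z)=z/|z|$, and concludes that since the tail of $\Sigma$ projects onto an arbitrarily small subset of $S^2$ the tangent cone cannot change; you instead derive a velocity estimate for the rescaled varifolds $V_\rho$ by testing the first variation against the radial field $\phi(x/\rho)x$, and conclude that $\|V_\rho\|(\phi)$ is Cauchy in $\rho$ for every test function. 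Your version costs a little more bookkeeping (the exact derivative identity, the second Cauchy--Schwarz in $\rho$ over each dyadic block, Allard compactness to identify the limit as a stationary cone, and the interpolation from dyadic to arbitrary scales), but it buys a cleaner endgame: weak-* convergence of the weight measures directly pins down a unique limit varifold, whereas the paper's final implication from "small projected area" to "unique tangent cone" is left implicit and really requires an argument of exactly the kind you supply.
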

\begin{proof}
	Define $F(z) = z/|z|$. Then note that $A(F(\Sigma\cap (B_r\bs B_s)))$ is equal to the area of the projection of $\Sigma\cap (B_r\bs B_s))$ onto the unit sphere. We will bound this area. We have:
		\begin{align*}
			A(F(\Sigma\cap (B_r\bs B_s))) & = \int_{\Sigma\cap B_r\bs B_s}\frac{|x^N|}{|x|^3} d\Sigma \\
						& \leq \left[\int_{\Sigma\cap B_r\bs B_s}\frac{|x^N|^2}{|x|^4}d\Sigma\right]^{1/2} 
												\left[\int_{\Sigma\cap B_r\bs B_s}\frac{1}{|x|^2} d\Sigma\right]^{1/2} \\
		\end{align*}
	By monotonicity, the term inside the first bracket is smaller than $e(s)$. Also, the term in the second bracket can be bounded by distance and area. Thus, we get that the above is smaller than 
		\begin{equation*}
			e(s)^{1/2}(s^{-2}A(B_r\cap\Sigma))^{1/2}
		\end{equation*}
	Now, by equation (\ref{eqn:errbd}), along with the fact that $A(B_r\cap\Sigma) < k\pi r^2$, we have that this is bounded by 
		\begin{equation*}
			Cs^{(\alpha-1)/2} \left[\left(\frac{r}{s}\right)^2 (r^{-2}A(B_r\cap\Sigma))\right]^{1/2} \leq C\frac{r}{s^{(1-\alpha)/2 + 1}}
		\end{equation*}
	Pick $s$ and $r$ such that $s\leq r \leq 2s$. Then 
		\begin{equation*}
			A(F(\Sigma\cap(B_r\bs B_s))) \leq Cs^{(\alpha-1)/2}
		\end{equation*}
	We then sum the above bound to see 
		\begin{align*}
			A(F(\Sigma\cap(B_{2^n r}\bs B_r))) & = \sum_{k = 1}^n A(F(\Sigma\cap(B_{2^k r}\bs B_{2^{k-1}r}))) \\
							& \leq C\sum_{k = 1}^n (2^k r)^{(\alpha-1)/2} \\
							& \leq \frac{C}{r^{(1-\alpha)/2}}\frac{1}{1-2^{(1-\alpha)/2}}
		\end{align*}
	As $r\to\infty$, this term goes to zero. Thus, the area of the projection of $\Sigma\bs B_r$ approaches zero as $r$ gets large, which means that the tangent cone must be unique. 
	
\end{proof}

\section{Proof of Corollary \ref{cor:sublin}}

Let $\Sigma_i$ be one of the components of $\Sigma\backslash(\mathcal{C}_{\alpha}\cup B_{R_0})$. Then note that the closure of $\Sigma_i$ in $\mathbb{R}^3$ must be conformally equivalent to $\overline{\mathbb{D}}^2$ with finitely many boundary points removed. Take a neighborhood $N$ of one of these missing boundary points which does not come close to any other missing boundary points. Then $N\subset \Sigma_i$ has exactly one boundary component. There are two options for the shape of $\partial N$. 

\begin{enumerate}
    \item The function $x_3|_{\partial N}$ is unbounded in both directions. 
    \item $x_3|_{\partial N}$ is bounded in one direction. 
\end{enumerate}

Note that $x_3$ cannot be bounded in both directions, as then $\partial N$ would be compact, which it is not. 

We temporarily assume that Option 1 occurs. Let $\gamma$ be the portion of $\partial N$ which is not on the boundary of $\mathcal{C}_{\alpha}\cup B_{R_0}$. Take $R_0$ larger so that $\gamma\subset B_{R_0}$. Let $R>>R_0$. Then some component of $N\cap B_R \cap C_{\alpha}^c$ will satisfy the conditions of Theorem \ref{thm:sublin1}. This implies that it is possible to prove the Lower Area Bound lemma for this component, and in particular, the area must be asymptotic to $\pi R^2/2$.

The following lemma will complete our proof:

\begin{lem}
    Under our assumptions, Option 2 is not possible. 
\end{lem}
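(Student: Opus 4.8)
The plan is to prove the lemma by contradiction. Suppose some end $N$ of $\Sigma$ satisfies Option 2; after replacing $x_3$ by $-x_3$ if necessary, assume $x_3\le C_0$ on $\partial N$ while $x_3\to-\infty$ at the puncture. The first step is a maximum-principle observation: the coordinate function $x_3$ is harmonic on the minimal surface $N$, and since $N$ has a single end, at which $x_3\to-\infty$, the super-level sets $\{x_3\ge t\}\cap N$ are compact; hence $x_3\le C_0$ on all of $N$, so $N$ lies in the half-space $\{x_3\le C_0\}$.

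The second step is a blow-down analysis. Choose $\rho_i\to\infty$ so that $\Sigma/\rho_i$ converges to a tangent cone $\mathcal{C}$ at infinity — a union of planes through $0$ of total density $k$ — and $N/\rho_i$ converges to a subcone $V$ of $\mathcal{C}$ (possible by the quadratic area bound). Because $\mathcal{C}_\alpha$ grows sublinearly, the sets $\mathcal{C}_\alpha/\rho_i$ collapse to the $x_3$-axis, so $V$ is stationary away from that axis with boundary contained in it; its link $V\cap S^2$ is then a union of great-circle arcs whose free endpoints can only occur at the poles $(0,0,\pm1)$. But $V\subset\{x_3\le0\}$ confines the link to the closed lower hemisphere, whose only closed geodesic is the equator, which misses the poles; so the link is either empty or equatorial, i.e. $V$ is the plane $\{x_3=0\}$ with some integer multiplicity $m\ge0$. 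Since $N$ is a topological disk with one end, it cannot be asymptotic to a full plane (near infinity its structure would be a graph, or multigraph, over an annulus, contradicting simple connectivity of the end), so $m=0$: $N$ has density zero at infinity.

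The third step is a global count. The analogous analysis at every end of $\Sigma$ (now without the half-space restriction) shows each end's blow-down is supported on planes through the $x_3$-axis, so by connectivity of the end it is either a single half-plane through that axis — which is Option 1 and has density $\tfrac12$ — or is trivial (density $0$). A density-zero end is one asymptotically following the thin tube $\mathcal{C}_\alpha$; I would exclude this by rescaling $N$ about a boundary point at the scale of the tube's radius, so that the limit is a minimal surface lying at bounded distance from a solid cylinder and running along its entire length — impossible, since the cylinder's nonzero mean curvature forces the surface to pull away. With density-zero ends excluded, every end has density exactly $\tfrac12$; as these densities sum to $k$ and there are $2k$ disks, there are precisely $2k$ ends, one per disk, each asymptotic to a half-plane through the $x_3$-axis and hence Option 1 — contradicting our choice of $N$.

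The cone/half-space identification is routine two-dimensional minimal-cone bookkeeping, and the maximum-principle reduction is standard; I expect the main obstacle to be the exclusion of a tube-hugging (density-zero) end, which is the only step requiring a genuinely new estimate. (An alternative to the counting step is to combine Lemma \ref{lem:LAB} for the Option-1 ends with $\A(\Sigma\cap B_R)=k\pi R^2-o(R^2)$, but this still hinges on ruling out the same degenerate end.)
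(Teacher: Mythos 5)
Your first step is the one that breaks. From the hypothesis that $x_3$ is bounded above by $C_0$ on $\partial N$ and tends to $-\infty$ along $\partial N$ at the puncture, you conclude that the super-level sets $\{x_3\ge t\}\cap N$ are compact and hence, by the maximum principle, that $N\subset\{x_3\le C_0\}$. But $x_3\to-\infty$ only along the \emph{boundary} curve; the interior of $N$ can escape upward through the end, so the super-level sets need not be compact and the maximum principle does not apply. Worse, the configuration one actually has to rule out violates your containment: take a vertical plane from which a half-infinite, sublinearly widening slot has been removed (the slot running upward along the $x_3$-axis, say). This is a simply connected piece of a minimal surface whose single boundary curve has $x_3$ bounded below but which is itself unbounded in both $x_3$ directions — an Option 2 end of density $1$ that lies in no half-space. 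Because the half-space claim fails, your blow-down $V$ need not have its link confined to a closed hemisphere, so the conclusion that $V$ is a horizontal plane of multiplicity $m$ (and then $m=0$) does not follow; your argument then spends its effort excluding a density-zero, tube-hugging end, which is the opposite of the actual obstruction. (This same example also contradicts your parenthetical claim that a simply connected end cannot be asymptotic to a full plane.)

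The paper's route is instructive by contrast: it makes no half-space claim, but instead uses a catenoid as a barrier to show that $N$ must have points of height at most $C_0+\log R$ on every sphere $\partial B_R$, so the rescaled surfaces $R_i^{-1}N$ cannot vanish in the limit and converge to a nontrivial union of geodesics on $S^2$ with endpoints only at the poles. Curvature estimates ($|A|\le C/d(\cdot,\partial N)$, combined with the facts that $\partial N$ collapses to the axis and, under Option 2, stays away from the south pole) force this union to be smooth, hence a single great circle through the north pole, i.e.\ the end has density $1$. The contradiction is then the global area budget: $2k$ components each of density at least $1/2$ must sum to $k$, leaving no room for a component of density $1$. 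Your final counting step gestures at the same budget, but the chain of reasoning leading to it rests on the unjustified (and in the relevant cases false) half-space containment, so the proposal as written does not establish the lemma.
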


\begin{proof}
    Suppose that Option 2 occurs. WLOG, let $x_3|_{\partial N}$ be bounded below by 0, and let $(x_1, x_2, 0)\in {\partial N}$ be the point at which that minimum is achieved. Let $\rho = (x_1^2 + x_2^2)^{1/2}$. Let $C$ be a catenoid where the radius of the center geodesic is strictly larger than $2\rho$. Then by a simple application of the maximum principle, $N$ must intersect $C$. In particular, this implies that $\inf_{\partial B_R} x_3|_N < C_0 + \log R$. 
    
    Now, consider a sequence of $R_i$ such that $\Sigma\cap B_{R_i}$ converges to a tangent cone at infinity. By compactness, $R_i^{-1} N\cap \partial B_{R_i}$ must either converge to a union of geodesics on $B_1$ or must disappear at infinity. However, due to the discussion of the previous paragraph, $N$ cannot disappear at infinty, and so must converge to a nontrivial union of geodesics $\Gamma_j$, possibly with endpoints at the north or south poles. 
    
    Let $p$ be a nonsmooth point on $\cup \Gamma_j$. Then there must exist a neighborhood $S$ of $p$ such that $|A|$ restricted to $S\cap R_i^{-1} N$ is unbounded as $i\to \infty$. However, since $N$ is a minimal disk with quadratic area growth bounds, $|A|(x)$ must be bounded by $C/d(x)$, where $d(x)$ is the distance of $x$ from the boundary of $N$. 
    
    Suppose that $p$ is not equal to the south pole. Then we can choose our neighborhood $S$ of $p$ to stay away from the $x_3$ axis, so we will have that $|A|<C$ uniformly on $S\cap R_i^{-1} N$. Suppose that $p$ is equal to  the south pole. Then by the assumption of Option 2, $\partial N$ is only contained in the region $x_3 \geq 0$. So, we can choose $S = B_{1/2}(p)$, and this implies the same uniform $|A|$ bound.
    
    Therefore, there will be no nonsmooth points of $\cup\Gamma_j$, which implies that $\Gamma_j$ consists of a single great circle passing through the north pole.
    
    In particular, this implies that there are some $\epsilon(R_i)\to 0$ such that the area of $R_i^{-1} N\cap B_1$ is greater than $\pi - \epsilon(R_i)$, where $\epsilon\to 0$ as $R_i\to\infty$. Thus, we have at least $2k$ components of $\Sigma\backslash \mathcal{C}_{\alpha}$, each of which has area growth at least $\pi R^2/2$ by the discussion of Option 1. However, since the global area growth is $k \pi R^2$, no component can have growth $\pi R^2$. 
\end{proof}

\section{Future Directions}\label{sec:fut}
There are several potential extensions of the work above. Theorem \ref{thm:sublin1} and Corollary \ref{cor:sublin} effectively assume that all tangent cones of $\Sigma$ are unions of planes \textit{with a common axis}. It is likely not significantly more difficult to show that the same result holds in the case when the one-dimensional singular set is more complicated, as long as away from a sublinearly growing neighborhood, $\Sigma$ is a union of disks. That is, we have the following as another potential step towards the resolution of Meeks' Conjecture:

\begin{conj}
    Let $\Sigma$ have the area growth of $k$ planes, and suppose that there exists a uniform $\alpha <1$ such that for each $R>R_0>>1$, the following is true: There exist line segments $L_i(R)$, $1\leq i\leq m(R) <M$ such that outside of an $\alpha-$sublinearly growing neighborhood of $\cup L_i(R)$, $\Sigma\cap B_R$ is a union of disks. Then $\Sigma$ has a unique tangent cone at infinity. 
\end{conj}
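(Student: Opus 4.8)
The plan is to run the two-step scheme of the proof of Theorem~\ref{thm:sublin1}. The second step --- passing from the a priori error estimate $e(r) \leq C r^{\alpha - 1}$ to uniqueness of the tangent cone --- is exactly the iteration argument of White~\cite{white} reproduced in Section~\ref{sec:proof}, and it uses nothing about $\Sigma$ beyond that estimate. So the whole problem is to prove the analogue of the Lower Area Bound (Lemma~\ref{lem:LAB}), namely
\begin{equation*}
    \A(\Sigma \cap B_R) \ \geq\ k\pi R^2 - C R^{\alpha + 1} \qquad (R \geq R_0),
\end{equation*}
now without the rotational symmetry that made that lemma a one-line computation.

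Write $N_R = \{x : d(x, \cup_i L_i(R)) \leq R^{\alpha}\}$ and $\Omega_R = B_R \bs N_R$, so that $\Sigma \cap \Omega_R = \bigsqcup_j \Sigma_j$ is a finite union of topological disks. First I would dispose of the piece inside the sublinear neighborhood: the monotonicity formula gives $\A(\Sigma \cap B_{R^{\alpha}}(p)) \leq k\pi R^{2\alpha}$ for every $p$, and $N_R$ is a radius-$R^{\alpha}$ tube about at most $M$ segments of length at most $2R$, hence is covered by $C R^{1-\alpha}$ such balls; therefore $\A(\Sigma \cap N_R) \leq C R^{1+\alpha}$, and the lower area bound reduces to
\begin{equation}\label{eqn:conjgoal}
    \sum_j \A(\Sigma_j) \ \geq\ k\pi R^2 - C R^{\alpha + 1}.
\end{equation}
Since $\Sigma$ has area growth $k$, the left side of \eqref{eqn:conjgoal} is automatically at most $k\pi R^2$, so the estimate is the assertion that the disk components fill out essentially all of the available density; all of the content is in the lower bound.

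To prove \eqref{eqn:conjgoal} I would combine a per-component area estimate with a blow-down count. The region $\Omega_R$ is a handlebody with at most $M$ handles, one over each $L_i(R)$ meeting $\partial B_R$, and each $\Sigma_j$ is in particular a competitor for the infimal area of a disk in $\overline{\Omega_R}$ with the same boundary, so $\A(\Sigma_j)$ is bounded below by that infimum. I would organize the $\partial \Sigma_j$ by their class in $\pi_1(\partial\Omega_R)$: those that are null-homotopic in $\partial\Omega_R$, or that wrap only handles over ``short'' segments, contribute a negligible amount, while those wrapping a handle over an $L_i(R)$ that runs within $R^{\alpha}$ of a line through the origin must span a disk of area at least $\tfrac{\pi}{2}R^2 - C R^{1+\alpha}$, the handlebody analogue of the cross-section computation in Lemma~\ref{lem:LAB}. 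For the count, a blow-down argument in the spirit of the proof of Corollary~\ref{cor:sublin} shows that every tangent cone $\mathcal{C}$ of $\Sigma$ is a union of planes through the origin: the rescalings $r_i^{-1}\Sigma$ converge to $\mathcal{C}$ smoothly and with multiplicity one away from $r_i^{-1}N_{r_i}$, whose radius $r_i^{\alpha - 1} \to 0$, so $\mathcal{C}$ is a smooth minimal cone --- hence a union of planes through the origin --- off a union of rays, and the curvature estimate $|A|(x) \leq C/d_{\partial\Sigma_j}(x)$ for minimal disks with quadratic area growth, together with embeddedness, excludes a non-planar geodesic-net link. Since $\mathcal{C}$ has density $k$ its link has length $2\pi k$, so the ``long'' components $\Sigma_j$ are precisely those limiting onto the flat sectors that tile $\mathcal{C}$; feeding the per-component bound into this decomposition and summing gives $\sum_j \A(\Sigma_j) \geq \tfrac{1}{2}(2\pi k) R^2 - C R^{1+\alpha} = k\pi R^2 - C R^{1+\alpha}$, which is \eqref{eqn:conjgoal}.

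The step I expect to be the main obstacle is the handlebody version of the cross-section estimate. When $\Omega_R$ fails to be rotationally symmetric there is no canonical least-area ``meridian disk'' to compare against, and one must show that a minimal disk in $\Omega_R$ whose boundary wraps the handle built over an $L_i(R)$ lying within $R^{\alpha}$ of a line through the origin still has area at least $\tfrac{\pi}{2}R^2 - C R^{1+\alpha}$, with $C$ uniform in $R$ and in the whole configuration $\{L_i(R)\}$; in particular one has to prevent such a disk from ``cheating'' by routing through the other handles. This quantitative, symmetry-free lower bound, together with upgrading the blow-down classification from the soft $o(R^2)$ statement it naturally yields to the sharp $C R^{1+\alpha}$ error, is exactly what is missing, and is the reason the statement remains a conjecture.
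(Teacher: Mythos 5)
The statement you are trying to prove is stated in the paper only as a conjecture in Section~\ref{sec:fut}; the paper offers no proof, only the remark that it is ``likely not significantly more difficult'' than Theorem~\ref{thm:sublin1}. Your proposal follows exactly the route the paper suggests --- reduce to a lower area bound $\A(\Sigma\cap B_R)\geq k\pi R^2-CR^{1+\alpha}$ and then quote White's dyadic iteration --- and your preliminary reductions (discarding $\Sigma\cap N_R$ via monotonicity centered at points of the segments, and reducing to a bound on $\sum_j\A(\Sigma_j)$) are sound. But what you have written is a strategy, not a proof, and you say as much yourself. The decisive gap is the one you flag: the paper's Lemma~\ref{lem:LAB} obtains the per-component bound $\A(\Sigma_i)\geq\frac{\pi}{2}R^2-CR^{1+\alpha}$ entirely from the rotational symmetry of the solid torus $T_R$, which identifies the least-area spanning disk with a meridian cross-section. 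For a general handlebody $\Omega_R=B_R\setminus N_R$ built over several segments there is no such canonical comparison surface, no symmetrization argument, and no a priori reason a spanning disk cannot save area by threading through the other handles; nothing in your sketch supplies a substitute mechanism, and this is precisely the content of the conjecture.

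The second gap is quantitative. Your blow-down argument, even if every step were justified (and the claimed multiplicity-one smooth convergence and the exclusion of non-flat geodesic nets both need real proofs), naturally yields only that the ``short'' components carry $o(R^2)$ area, i.e.\ $e(r)=o(1)$. That is not enough for the uniqueness step: White's argument as reproduced in Section~\ref{sec:proof} needs the summability of $e(2^k r)^{1/2}$ over dyadic scales, so one must produce a rate $e(r)\leq Cr^{\alpha-1}$ (or at least a Dini condition), and no mechanism in your sketch converts the soft compactness statement into that rate. Until both of these steps are filled in --- the symmetry-free cross-section estimate and the quantitative blow-down --- the statement remains, as the paper labels it, a conjecture.
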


\end{document}